\newtheorem{thm}{Theorem}
\newtheorem{thmx}{Theorem}
\newtheorem{cor}[thmx]{Corollary}
\newtheorem{case}{Case}
\newtheorem{subcase}{Subcase}
\renewcommand{\thethmx}{\Alph{thmx}}
\newtheorem{rem}{Remark}
\newtheorem{Que}{Question}
\newtheorem*{define}{Definition}
\numberwithin{equation}{section}
 \newtheorem*{conj}{Hayman's Conjecture}
\begin{document}

\title[]{ Some new findings concerning value distribution of  a pair  of delay-differential polynomials }

\author[
]{ Jianren Long*, Xuxu Xiang}

\address{Jianren Long \newline School of Mathematical Sciences, Guizhou Normal University, Guiyang, 550025, P.R. China. }
\email{longjianren2004@163.com}

\address{Xuxu Xiang \newline School of Mathematical Sciences, Guizhou Normal University, Guiyang, 550025, P.R. China. }
\email{1245410002@qq.com}


\date{}


\begin{abstract}The paired Hayman's conjecture of different types
are considered. More accurately speaking,  the zeros  of a pair
of  $f^nL(z,g)-a_1(z)$ and  $g^mL(z,f)-a_2(z)$ are  characterized  using  different methods from those previously employed, where
$f$ and $g$ are both transcendental entire functions, $L(z,f)$ and  $L(z,g)$ are non-zero linear delay-differential polynomials, $\min\{n,m\}\ge 2$, $a_1,a_2$ are  non-zero small functions with relative to $f$ and $g$, or to $f^n(z)L(z,g)$ and $g^m(z)L(z,f)$, respectively.
These results give  answers  to three open questions raised by  Gao, Liu[Bull. Korean Math. Soc. 59 (2022)] and Liu, Liu[J. Math. Anal. Appl. 543 (2025)].
\end{abstract}



\keywords{Nevanlinna theory; Picard exceptional values; Entire functions; Paired Hayman's conjecture; Delay-differential polynomials.\\
2020 Mathematics Subject Classification: 39A05, 30D35 \\
This research work is supported by the National Natural Science Foundation of China (Grant No. 12261023, 11861023).\\
*Corresponding author. \\
}
\maketitle
\section{Introduction}

Let $f$ be a meromorphic function in the complex plane $\mathbb{C}$. Assume that the reader is familiar with the standard notation and basic results of Nevanlinna theory,  such as $m(r,f),~N(r,f)$,$~T(r,f)$, see \cite{hayman} for more details. A meromorphic
function $g$ is said to be a small function of $f$ if $T(r,g)=S(r,f)$, where $S(r,f)$  denotes any quantity
that satisfies $S(r,f)= o(T(r, f))$ as $r$ tends to infinity, outside a possible exceptional set of finite linear measure.
 $\rho(f)=\underset{r\rightarrow \infty}{\lim\sup}\frac{\log^+T(r,f)}{\log r}$ and   $\rho_2(f)=\underset{r\rightarrow \infty}{\lim\sup}\frac{\log^+\log^+T(r,f)}{\log r}$  are used to denote the order and the hyper-order  of $f$, respectively.  In general, the linear delay-differential polynomial of $f$ is defined by
\[L(z,f)=\sum_{i=1}^{m}b_i(z)[f^{(\nu_{i})}(z+c_{i})],\]
where~$m,\nu_{i}$~are nonnegative integers, coefficients~\(b_i(z)\)~are meromorphic functions.~If $c_i=0$ for $i=1,...,m$,~ then~\(L(z,f)\)~is the differential polynomial of $f$.~If~$\nu_{i}=0$ for$~i=1,~2,...,m$, then~$L(z,f)$~is the difference polynomial of $f$. The following definition is   the generalized Picard exceptional values or functions.

\begin{define}Let $f$ be a meromorphic function.
	If $f - \alpha$ has finitely many zeros, then  meromorphic function $\alpha$ is called a generalized Picard exceptional function of $f$.  Furthermore, if  $\alpha$ is also a small function of $f$,  then we say $f$  has a generalized Picard exceptional small function $\alpha$. If $\alpha$ is a finite constant, then we say that $f$  has a generalized Picard exceptional value $\alpha$.
\end{define}


The well-known  Picard theorem can be derived from the Nevanlinna's second fundamental theorem.
Here, we present the second fundamental theorem concerning three small functions\cite[Theorem 2.5]{hayman}: Let $f$ be a transcendental  meromorphic function, and let $a_j$ $(j=1,2,3)$ be small functions of $f$, then
\[T(r,f)<{\sum_{j=1}^3} \overline{N}(r,\frac{1}{f-a_j})+S(r,f).\]
The generalizations of Nevanlinna's second main theorem  concerning $q~(\ge3)$  small functions was given by Yamanoi\cite{ya}, where $q$ is an integer. It is also easy to see  a transcendental  meromorphic function has at most two generalized Picard exceptional small functions.

The study of the generalized Picard exceptional values of complex differential polynomials has a long and rich history of research, including notable contributions such as Milloux's inequality\cite[Theorem 3.2]{hayman}, Hayman's conjecture\cite{Hayman59}, Wiman's conjecture\cite{Wi}.
In 1959, Hayman considered the value distribution of complex differential polynomials in his significant paper\cite{Hayman59}. One of his results can be stated as follows.

\begin{renewcommand}{\thethm}{\Alph{thm}}
	\begin{thm}\cite[Theorem 10]{Hayman59}
		\label{thA}
		If $f$ is a transcendental entire function and $n~(\ge2)$ is a positive integer, then $f^nf'$ cannot have any non-zero generalized Picard exceptional values.
	\end{thm}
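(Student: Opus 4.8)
The plan is to argue by contradiction. Suppose $a\neq 0$ is a finite constant for which $\psi:=f^nf'$ has only finitely many $a$-points; since $f$, and hence $\psi$, is transcendental, this means $\overline{N}\!\left(r,\tfrac{1}{\psi-a}\right)=S(r,f)$. Because $f$ is entire, $\psi$ is entire, so $\overline{N}(r,\psi)=0$, and the estimate $m(r,\psi)=m\!\left(r,f^{n+1}\cdot\tfrac{f'}{f}\right)\le (n+1)T(r,f)+S(r,f)$ shows $T(r,\psi)=O(T(r,f))$; thus $S(r,\psi)$ and $S(r,f)$ may be used interchangeably. First I would apply Nevanlinna's second fundamental theorem to $\psi$ with the three target values $0,\,a,\,\infty$, which together with the hypothesis gives
\[
T(r,\psi)\le \overline{N}\!\left(r,\tfrac{1}{\psi}\right)+S(r,f).
\]

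The key step is a multiplicity analysis of the zeros of $\psi$. At a zero of $f$ of order $p\ge 1$, the factor $f^n$ contributes order $np$ and $f'$ order $p-1$, so $\psi$ vanishes there to order $(n+1)p-1\ge n$; hence such a point is counted once in $\overline{N}(r,1/\psi)$ but contributes at least $n-1$ to the excess $N(r,1/\psi)-\overline{N}(r,1/\psi)$. Comparing the displayed inequality with the first fundamental theorem $N(r,1/\psi)\le T(r,\psi)+O(1)$ yields $N(r,1/\psi)-\overline{N}(r,1/\psi)=S(r,f)$, and therefore
\[
(n-1)\,\overline{N}\!\left(r,\tfrac{1}{f}\right)\le N\!\left(r,\tfrac1\psi\right)-\overline{N}\!\left(r,\tfrac1\psi\right)=S(r,f).
\]
Since $n\ge 2$, this forces $\overline{N}(r,1/f)=S(r,f)$; this is precisely the place where the hypothesis $n\ge2$ is indispensable.

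With $\overline{N}(r,1/f)=S(r,f)$ in hand, the remaining estimates become sharp. Because $N(r,f'/f)=\overline{N}(r,1/f)=S(r,f)$ and $m(r,f'/f)=S(r,f)$ by the lemma on the logarithmic derivative, we obtain $T(r,f/f')=S(r,f)$, and from $f^{n+1}=\psi\cdot\tfrac{f}{f'}$ the lower bound
\[
(n+1)T(r,f)=m(r,f^{n+1})\le T(r,\psi)+S(r,f).
\]
On the other hand, $\overline{N}(r,1/f)=S(r,f)$ reduces $\overline{N}(r,1/\psi)$ to the zeros of $f'$ lying off the zero set of $f$, so $T(r,\psi)\le \overline{N}(r,1/\psi)+S(r,f)\le N(r,1/f')+S(r,f)\le T(r,f')+S(r,f)\le T(r,f)+S(r,f)$. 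Combining the two bounds gives $n\,T(r,f)\le S(r,f)$, which is impossible for the transcendental function $f$ when $n\ge 2$, and this contradiction proves the theorem.

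The main obstacle, and the reason a naive application of the second fundamental theorem is not enough, is obtaining constants strong enough to reach $n=2$ rather than only $n\ge 3$: bounding $\overline{N}(r,1/f)$ and the count of zeros of $f'$ crudely by $T(r,f)$ loses too much. The device that overcomes this is the two-stage use of the high multiplicity of the zeros of $\psi$ — first to deduce that $f$ itself has essentially no zeros, i.e. $\overline{N}(r,1/f)=S(r,f)$, and then to feed this back so that all logarithmic-derivative remainders collapse into $S(r,f)$.
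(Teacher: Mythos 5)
The paper does not prove this statement: Theorem \ref{thA} is quoted directly from Hayman's 1959 paper, so there is no internal proof to compare against, and your argument has to be judged on its own. It is correct as written. The second fundamental theorem applied to $\psi=f^nf'$ with the values $0,a,\infty$ gives $T(r,\psi)\le\overline{N}(r,1/\psi)+S(r,f)$; combined with $N(r,1/\psi)\le T(r,\psi)+O(1)$ this forces $N(r,1/\psi)-\overline{N}(r,1/\psi)=S(r,f)$, and since a zero of $f$ of order $p$ is a zero of $\psi$ of order $(n+1)p-1$, it contributes $(n+1)p-2\ge n-1\ge 1$ to that excess, whence $\overline{N}(r,1/f)=S(r,f)$; the closing sandwich $(n+1)T(r,f)\le T(r,\psi)+S(r,f)\le T(r,f)+S(r,f)$ is then legitimate because $T(r,f/f')=S(r,f)$ and $\overline{N}(r,1/\psi)\le \overline{N}(r,1/f)+N(r,1/f')+O(1)\le T(r,f)+S(r,f)$. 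Your route is genuinely different from Hayman's original one, which sets $g=f^{n+1}$, notes that every zero of $g$ has multiplicity at least $n+1\ge 3$, and invokes his general theorem on Picard values of derivatives of functions with only multiple zeros; your proof instead runs entirely on the second fundamental theorem plus the lemma on the logarithmic derivative, which is the ``logarithmic derivative lemma'' strategy of \cite{Liu24} and, in substance, the same mechanism the present paper uses for Theorems \ref{th1.1} and \ref{th1.2} (localize the zeros of $f$, then trade $m(r,1/f^n)$ against $T(r,f)$). The multiplicity bookkeeping is also exactly where $n\ge2$ enters: for $n=1$ the excess $N-\overline{N}$ gives no control on $\overline{N}(r,1/f)$, consistent with the fact that Clunie's extension to $n=1$ needs a different argument.
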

\end{renewcommand}
Clunie\cite{Clunie} proved that Theorem \ref{thA} is also true for the case $ n=1$. The well-known Hayman's conjecture is also presented in the same paper\cite{Hayman59}.
\begin{conj}\cite{Hayman59}
	If $f$ is a transcendental meromorphic function and $n$ is a positive integer, then $f^nf'$ cannot have any non-zero generalized Picard exceptional values.
\end{conj}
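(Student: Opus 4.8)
The plan is to argue by contradiction after normalising the value. Since $(cf)^n(cf)'=c^{n+1}f^nf'$, the substitution $f\mapsto cf$ with $c^{n+1}=1/a$ carries the zeros of $f^nf'-a$ onto those of $(cf)^n(cf)'-1$, and $cf$ is transcendental exactly when $f$ is; hence we may assume the alleged nonzero Picard value is $a=1$. Write $F:=f^nf'$ and suppose, for contradiction, that $F-1$ has only finitely many zeros, so that $N\!\left(r,\tfrac{1}{F-1}\right)=S(r,f)$. The goal is to contradict the transcendence of $f$.

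The engine for large $n$ is a single logarithmic-derivative identity fed into the second fundamental theorem. From $\tfrac{f'}{f}=\tfrac{F}{f^{n+1}}$ one gets
\[\frac{1}{f^{n+1}}=\frac{1}{F}\cdot\frac{f'}{f}\ \Longrightarrow\ (n+1)\,m\!\left(r,\tfrac1f\right)\le m\!\left(r,\tfrac1F\right)+S(r,f).\]
Next I would carry out the multiplicity bookkeeping for $F=f^nf'$: a zero of $f$ of order $k$ is a zero of $F$ of order $(n+1)k-1$, and a pole of $f$ of order $p$ is a pole of $F$ of order $(n+1)p+1$. This gives $N(r,\tfrac1F)\ge(n+1)N(r,\tfrac1f)-\overline N(r,\tfrac1f)$, $\overline N(r,F)=\overline N(r,f)$, and $\overline N(r,\tfrac1F)\le\overline N(r,\tfrac1f)+\overline N_0(r,\tfrac1{f'})$, the last term counting the zeros of $f'$ that are not zeros of $f$. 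Combining the proximity bound with $T(r,\tfrac1F)=m(r,\tfrac1F)+N(r,\tfrac1F)$ yields $T(r,F)\ge(n+1)T(r,f)-\overline N(r,\tfrac1f)+S(r,f)$, and feeding the counting estimates into $T(r,F)\le\overline N(r,F)+\overline N(r,\tfrac1F)+\overline N(r,\tfrac1{F-1})+S(r,F)$ (whose last term is $S(r,f)$ by hypothesis) produces an inequality of the shape $(n+1)T(r,f)\le\overline N(r,f)+2\overline N(r,\tfrac1f)+\overline N_0(r,\tfrac1{f'})+S(r,f)$. Each term on the right is $O(T(r,f))$, which already forces $T(r,f)=S(r,f)$ for all sufficiently large $n$; restoring the nonnegative ramification term that I discarded from the second fundamental theorem, as Hayman does, lowers the threshold to $n\ge3$, and in every such case we reach the contradiction.

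The hard part is precisely the small exponents $n=2$ and $n=1$, where this crude counting leaves no slack. For $n=2$ I would follow Mues, whose argument exploits the ramification and the interaction of the zeros of $f$ and $f'$ far more tightly. The genuinely delicate case is $n=1$, namely that $ff'$ has no finite nonzero Picard value for transcendental meromorphic $f$; here I would abandon pure value counting and rescale: assuming $ff'-1$ has finitely many zeros, one applies Zalcman's lemma to an associated non-normal family, extracts a nonconstant meromorphic limit on $\mathbb C$, and contradicts Picard's theorem, as carried out by Bergweiler--Eremenko through the singularities of the inverse function and, independently, by Chen--Fang. This rescaling step is the main obstacle, since it is exactly where elementary Nevanlinna estimates break down and one must import the geometric function theory of the plane.

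Assembling the three ranges $n\ge3$, $n=2$ and $n=1$ exhausts all positive integers $n$ and completes the proof for transcendental meromorphic $f$; the entire case is already contained in Theorem~\ref{thA} together with Clunie's extension to $n=1$.
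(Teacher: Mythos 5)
The paper does not prove this statement: it is Hayman's conjecture, recorded with citations to its resolution in the literature (Hayman for $n\ge3$, Mues for $n=2$, and Bergweiler--Eremenko, Chen--Fang and Zalcman for $n=1$), so there is no in-paper argument to compare against. Your outline reproduces exactly that three-way decomposition; the $n\ge3$ part is sound Nevanlinna bookkeeping in the spirit of Hayman's Theorem 9 (your crude version only reaches $n\ge5$, and you correctly note that the ramification term must be retained to get down to $n\ge3$), but for $n=2$ and $n=1$ you defer entirely to Mues and to the rescaling/normal-families arguments rather than proving anything, so what you have is an accurate survey of the known proof rather than a self-contained one. Since the paper itself offers only the same citations, your account agrees with the sources the paper invokes, and the only caveat is that the genuinely hard cases remain black-boxed.
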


Hayman's conjecture has been solved completely. Hayman\cite[Corollary to Theorem 9]{Hayman59} obtained the proof for the case where $n\ge3$. Mues\cite{Mues} provided the proof for the case where $n=2$. Finally, using the theory of normal families, Bergweiler and Eremenko\cite{Bergweiler}, Chen and Fang\cite{Chen}, and Zalcman\cite{Zalcman} proved the case for $n=1$, respectively.

In 2007, Laine and Yang \cite[Theorem 2]{Laine07} considered the generalized Picard exceptional values of complex difference polynomials, as outlined below, which can be viewed the  difference analogues of  Theorem \ref{thA}.

\begin{renewcommand}{\thethm}{\Alph{thm}}
	\begin{thm}\cite{Laine07}
		\label{thB}
		If $f$ is a transcendental entire function of finite order and $n~(\ge2)$ is a positive integer, then $f^nf(z+c)$ cannot have any non-zero generalized Picard exceptional values, where  $c$ is a non-zero constant.
	\end{thm}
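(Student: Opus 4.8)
The plan is to argue by contradiction. Suppose $a\neq 0$ is a generalized Picard exceptional value of $F:=f^{n}f(z+c)$, so that $F-a$ has only finitely many zeros and hence $\overline{N}(r,\frac{1}{F-a})=O(\log r)=S(r,F)$. Since $f$ is entire and of \emph{finite} order, the essential tool at my disposal is the difference analogue of the logarithmic derivative lemma (Chiang--Feng, Halburd--Korhonen), which gives $m(r,\frac{f(z+c)}{f(z)})=S(r,f)$ and $m(r,\frac{f(z)}{f(z+c)})=S(r,f)$, together with the shift relation $T(r,f(z+c))=T(r,f)+S(r,f)$. These play the role that $m(r,f'/f)=S(r,f)$ plays in the proof of Theorem~\ref{thA}.

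First I would pin down the growth of $T(r,F)$. Writing $f^{n+1}=F\cdot\frac{f(z)}{f(z+c)}$ and using that every function here is entire, the difference lemma yields $m(r,f^{n+1})\le m(r,F)+S(r,f)$, that is $(n+1)T(r,f)\le T(r,F)+S(r,f)$. Conversely, $m(r,F)\le n\,m(r,f)+m(r,f(z+c))$ gives $T(r,F)\le (n+1)T(r,f)+S(r,f)$. Hence $T(r,F)=(n+1)T(r,f)+S(r,f)$, and in particular $S(r,F)=S(r,f)$, so that the two error terms may be used interchangeably.

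Next I would apply Nevanlinna's second fundamental theorem to $F$ with the three targets $0$, $a$ and $\infty$. Because $F$ is entire, $\overline{N}(r,F)=0$, and by the standing assumption $\overline{N}(r,\frac{1}{F-a})=S(r,F)$, so $T(r,F)\le \overline{N}(r,\frac{1}{F})+S(r,F)$. The zeros of $F$ are precisely the zeros of $f$ together with those of $f(z+c)$, whence $\overline{N}(r,\frac{1}{F})\le \overline{N}(r,\frac{1}{f})+\overline{N}(r,\frac{1}{f(z+c)})\le 2T(r,f)+S(r,f)$, using the shift relation once more. Combining this with the growth estimate from the previous step gives $(n+1)T(r,f)+S(r,f)=T(r,F)\le 2T(r,f)+S(r,f)$, i.e. $(n-1)T(r,f)\le S(r,f)$; for $n\ge 2$ this forces $T(r,f)=S(r,f)$, contradicting the transcendence of $f$.

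The step I expect to be the crux is the replacement of the classical logarithmic derivative estimate by its finite-order difference counterpart: every quantitative input above (the two-sided bound on $T(r,F)$, the control of $\overline{N}(r,\frac{1}{f(z+c)})$, and the identification $S(r,F)=S(r,f)$) rests on it. This is exactly why the hypothesis of finite order, which is absent from Theorem~\ref{thA}, becomes indispensable in the difference setting. I would also take care that the exceptional set produced by the difference lemma remains of finite linear measure, so that the $S(r,f)$ bookkeeping is legitimate throughout.
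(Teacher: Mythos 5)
Your proof is correct. The two-sided estimate $T(r,f^{n}f(z+c))=(n+1)T(r,f)+S(r,f)$, obtained from the finite-order difference analogue of the logarithmic derivative lemma, combined with the second fundamental theorem for $F=f^{n}f(z+c)$ at the targets $0$, $a$, $\infty$ and the bound $\overline{N}(r,\tfrac{1}{F})\le\overline{N}(r,\tfrac{1}{f})+\overline{N}(r,\tfrac{1}{f(z+c)})\le 2T(r,f)+S(r,f)$, yields $(n-1)T(r,f)\le S(r,f)$, a contradiction for $n\ge2$; the only housekeeping caveat is that the difference lemma's exceptional set has finite logarithmic rather than linear measure, which is harmless here. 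Be aware, though, that the paper gives no proof of Theorem B at all --- it is quoted from Laine and Yang, and your argument is essentially the original one from that source. The route the present paper takes for its own generalization (Theorem 1.1 together with Remark 1, which contains Theorem B as the case $g\equiv f$, $L(z,f)=f(z+c)$) is genuinely different: the authors write $f^{n}L(z,g)-\alpha=pe^{b}$ by Weierstrass factorization, differentiate, eliminate $e^{b}$, and extract a contradiction from the resulting identity (zeros of $f$ or $g$ would have to be zeros of a fixed small function, followed by proximity estimates for $1/f^{n}$ and $1/g^{m}$). That elimination method does not require a sharp lower bound for $T(r,F)$ and therefore survives the passage to pairs $f^{n}L(z,g)$ and $g^{m}L(z,f)$, where your counting argument would stall because $\overline{N}(r,1/L(z,g))$ is controlled by $T(r,g)$ rather than by $T(r,f)$.
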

\end{renewcommand}

For other results concerning the generalized Picard exceptional values of differences of meromorphic functions, please refer to \cite{Be, czx,lk, long}.
The  generalized Picard exceptional small functions  of delay-differential polynomials related to Hayman's conjecture was considered by
Liu, Liu and Zhou\cite{Liu14}, and they obtained the following result.
\begin{renewcommand}{\thethm}{\Alph{thm}}
	\begin{thm}\cite{Liu14}
		\label{thC}
		Let $f$ be a transcendental meromorphic function of hyper-order $\rho_2(f)<1$, and let $c$ be a finite constant. If $n\ge2k + 6$ or if
		$n\ge3$ and $f$ is a transcendental entire function, then $f^nf^{(k)}(z+c)$
	can not have any non-zero generalized Picard exceptional small function $a$ with respect to $f$.
	\end{thm}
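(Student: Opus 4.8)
The plan is to argue by contradiction, using the second main theorem for three small functions recalled above together with the difference logarithmic derivative lemma to absorb the shift. Set $F=f^{n}f^{(k)}(z+c)$ and suppose that some non-zero small function $a$ of $f$ is a generalized Picard exceptional small function of $F$, i.e. $\overline N(r,1/(F-a))=S(r,f)$. The hypothesis $\rho_2(f)<1$ enters here and essentially only here: it guarantees $m(r,f(z+c)/f)=S(r,f)$, hence $m(r,f^{(k)}(z+c)/f)=S(r,f)$, and it makes each of $T$, $N$, $\overline N$ invariant under $z\mapsto z+c$ up to an $S(r,f)$ error. In particular I may treat the delay factor $f^{(k)}(z+c)$ as though it were $f^{(k)}$ throughout.

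First I would record the elementary comparison
\[
T(r,F)\ge (n-1)T(r,f)-k\overline N(r,f)+S(r,f),
\]
which follows from $f^{n}=F/f^{(k)}(z+c)$ together with $T(r,f^{(k)}(z+c))\le T(r,f)+k\overline N(r,f)+S(r,f)$; this also shows $T(r,a)=S(r,F)$, so $a$ is small relative to $F$ as well. Applying the second main theorem to $F$ with the three small functions $0$, $a$, $\infty$ gives
\[
T(r,F)\le \overline N\!\left(r,\tfrac1F\right)+\overline N\!\left(r,\tfrac1{F-a}\right)+\overline N(r,F)+S(r,F),
\]
and the middle term is $S(r,f)$ by assumption, so everything reduces to counting the zeros and poles of $F$.

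The decisive observation is that every zero of $f$ of multiplicity $p$ is a zero of $F=f^{n}f^{(k)}(z+c)$ of multiplicity at least $np$, away from the poles of the delay factor, whence
\[
\overline N\!\left(r,\tfrac1f\right)\le \tfrac1n N\!\left(r,\tfrac1F\right)+\overline N(r,f^{(k)}(z+c))\le \tfrac1n T(r,F)+\overline N(r,f)+S(r,f).
\]
For entire $f$ the pole corrections vanish, and combining this with $\overline N(r,1/f^{(k)}(z+c))\le T(r,f)+S(r,f)$ the second main theorem yields $T(r,F)\le \tfrac1n T(r,F)+T(r,f)+S(r,f)$, i.e. $T(r,F)\le \tfrac{n}{n-1}T(r,f)+S(r,f)$. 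Together with the lower bound this forces
\[
\frac{n^{2}-3n+1}{n-1}\,T(r,f)\le S(r,f),
\]
a contradiction for $n\ge 3$ since $f$ is transcendental. In the meromorphic case the same scheme runs, but now the pole term $\overline N(r,F)\le 2\overline N(r,f)+S(r,f)$, the correction $\overline N(r,f^{(k)}(z+c))$, and the $k\overline N(r,f)$ appearing in $T(r,f^{(k)}(z+c))$ must all be carried along; estimating each of them by $T(r,f)$ is exactly what degrades the threshold from $3$ to $2k+6$.

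The main obstacle is this bookkeeping of multiplicities and poles. The entire gain that makes the argument close comes from the factor $n$ in the multiplicity of the zeros of $F$ at zeros of $f$, so one must verify that these zeros genuinely have multiplicity $\ge n$ — they can be diminished only at the poles of $f^{(k)}(z+c)$, which are controlled by $\overline N(r,f)$ — and one must check that every shift-induced error term is truly $S(r,f)$ rather than a real contribution, which is guaranteed precisely by $\rho_2(f)<1$. I expect the delicate point in reaching the stated boundary $n=3$ in the entire case to be the use of the reduced estimate $\overline N(r,1/f)\le \tfrac1n T(r,F)$ in place of the crude $\overline N(r,1/f)\le T(r,f)$; the latter would only deliver $n\ge 4$.
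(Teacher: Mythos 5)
This statement is quoted from \cite{Liu14} and the paper contains no proof of it, so there is nothing internal to compare against; judged on its own, your argument is correct and follows what is essentially the standard route of the source: the second main theorem for the three small targets $0,a,\infty$ applied to $F=f^nf^{(k)}(z+c)$, with the hypothesis $\rho_2(f)<1$ used only through the difference analogue of the logarithmic derivative lemma. The bookkeeping does close as you claim: in the entire case the multiplicity-$n$ estimate $\overline N(r,1/f)\le\frac1nT(r,F)+O(1)$ yields $(n^2-3n+1)T(r,f)\le S(r,f)$, giving exactly $n\ge3$, while in the meromorphic case the crude bounds $T(r,F)\le(k+4)T(r,f)+S(r,f)$ against $T(r,F)\ge(n-k-1)T(r,f)+S(r,f)$ force a contradiction precisely when $n>2k+5$, i.e.\ $n\ge2k+6$ (and the non-sharpness of this crude count is consistent with the later improvement to $n>k+4$ mentioned after the theorem).
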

\end{renewcommand}

Later, the condition $n\ge2k + 6$ in Theorem \ref{thC} has been weakened by Laine et al.\cite{Laine20} to $n>k+4$.

\section{ Hayman's conjecture  on  a pair  of delay-differential polynomials}

In 2022, Gao  and Liu\cite{Gao22} considered the  Hayman's conjecture  regarding  a pair  of delay-differential polynomials.

\begin{renewcommand}{\thethm}{\Alph{thm} }
	\begin{thm}\cite{Gao22}
		Let $f$ and $g$ be both transcendental meromorphic functions, $n$ and $k$ be non-negative integers, $\alpha$ be a  non-zero small function with respect to $f$ and $g$. If one of the following conditions is satisfied:
		\label{thD}
		\begin{itemize}
			\item [(1)] $f$ and $g$ are both  entire functions, $n\ge3$, $\rho_2(f)<1$ and $\rho_2(g)<1$, $L(z,h)=h^{(k)}(z+c)$ or $L(z,h)=h(z+c)-h(z)$;
			\item [(2)]$f$ and $g$ are meromorphic functions,  $\rho_2(f)<1$ and $\rho_2(g)<1$, $L(z,h)=h(z+c)$, $n\ge 4$ or $L(z,h)=h(z+c)-h(z)$, $n\ge 5$ or $L(z,h)=h^{(k)}(z+c)$, $n\ge k+4$, respectively;
			\item [(3)]   $f$ and $g$ are entire functions, $n\ge3$, $L(z,h)=h^{(k)}(z)$, or $f$ and $g$ are meromorphic functions, $n\ge k+4$, $L(z,h)=h^{(k)}(z)$, respectively;
		\end{itemize}
		then  $f^nL(z,g)$ and $g^nL(z,f)$ cannot have a common non-zero generalized Picard exceptional small function $\alpha$.
	\end{thm}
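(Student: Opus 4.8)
The plan is to argue by contradiction: suppose $\alpha$ is a common non-zero generalized Picard exceptional small function of $F:=f^{n}L(z,g)$ and $G:=g^{n}L(z,f)$, so that $\overline{N}(r,1/(F-\alpha))=S(r,f)+S(r,g)$ and $\overline{N}(r,1/(G-\alpha))=S(r,f)+S(r,g)$. The first preparatory task is to record the shift estimates available under $\rho_{2}(f)<1$ and $\rho_{2}(g)<1$: by the difference analogue of the lemma on the logarithmic derivative (Halburd--Korhonen, Chiang--Feng) combined with the classical logarithmic derivative lemma, one gets $m(r,L(z,g)/g)=S(r,g)$ and $T(r,L(z,g))=T(r,g)+S(r,g)$ for each of the operators $h(z+c)$, $h(z+c)-h(z)$ and $h^{(k)}(z+c)$, together with the symmetric statements for $L(z,f)$; in case (3) the operator $h^{(k)}(z)$ carries no shift, so the classical lemma alone suffices and no hyper-order hypothesis is needed. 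Since the operators appearing here all have unit coefficients, $L(z,g)$ is entire whenever $g$ is, which keeps the pole bookkeeping clean. These identities are what let me treat $L(z,g)$ as ``the same size'' as $g$, absorbing the non-local and differential structure of $L$ into $S$-terms.

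For the main mechanism I would apply Nevanlinna's second fundamental theorem for three small targets to $F$, with targets $0$, $\alpha$ and $\infty$,
\[
T(r,F)\le \overline{N}(r,F)+\overline{N}\Big(r,\tfrac{1}{F}\Big)+\overline{N}\Big(r,\tfrac{1}{F-\alpha}\Big)+S(r,F),
\]
and the analogous inequality for $G$. The hypothesis kills the $\alpha$-terms. The zeros of $F$ split into zeros of $f$ and zeros of $L(z,g)$, so $\overline{N}(r,1/F)\le \overline{N}(r,1/f)+\overline{N}(r,1/L(z,g))$, and I would exploit that every zero of $f$ is a zero of $F$ of multiplicity $\ge n$. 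For the lower bound I would use $nT(r,f)=T(r,f^{n})\le T(r,F)+T(r,L(z,g))+O(1)$, giving $T(r,F)\ge nT(r,f)-T(r,g)-S(r,g)$. Adding the two resulting inequalities and inserting the shift estimates produces a bound of the shape
\[
(n-c)\big[T(r,f)+T(r,g)\big]\le S(r,f)+S(r,g),
\]
where $c$ depends on the chosen operator; as $f$ and $g$ are transcendental, this is the desired contradiction once $n$ exceeds the stated threshold.

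The case distinctions then sit in the value of $c$. In the entire cases (1) and (3) the pole terms $\overline{N}(r,F)$ and $\overline{N}(r,G)$ vanish, and the argument gives the contradiction for $n\ge3$. In the meromorphic cases the poles of $F$ come from $f$ and from $L(z,g)$; here a pole of $g$ of order $p$ produces a pole of $g^{(k)}(z+c)$ of order $p+k$, so that $N(r,g^{(k)}(z+c))=N(r,g)+k\,\overline{N}(r,g)+S(r,g)$, and it is exactly this $k$ that inflates the threshold to $n\ge k+4$; the plain shift $h(z+c)$ and the difference $h(z+c)-h(z)$ give the milder thresholds $n\ge4$ and $n\ge5$ after the same bookkeeping.

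The \emph{main obstacle}, absent from the single-function Hayman theorem, is that $F$ is built from $f$ \emph{and} $g$ at once, so neither second-fundamental-theorem inequality closes on its own; the two must be added and the cross terms $T(r,f)$ and $T(r,g)$ balanced against each other. Pushing the constant down to the sharp threshold (for instance $n\ge3$ rather than the $n\ge4$ produced by the crude bound $\overline{N}(r,1/f)\le T(r,f)$ in the entire case) is where the argument is delicate: one must extract the full multiplicity $\ge n$ of the zeros coming from $f^{n}$ and $g^{n}$ through the ramification term of the second fundamental theorem (equivalently, a truncated counting refinement), and one must control $\overline{N}(r,1/L(z,g))$ and the pole count of $L(z,g)$ uniformly under the hyper-order hypothesis. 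Once these estimates are in place, the remaining steps are routine Nevanlinna bookkeeping.
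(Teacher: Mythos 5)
First, a point of orientation: the paper does not prove this statement at all --- it is Theorem \ref{thD}, quoted from Gao and Liu \cite{Gao22}; the paper's own proofs concern the stronger Theorems \ref{th1.1} and \ref{th1.2}, and they proceed by a genuinely different route (Weierstrass factorization $f^nL(z,g)-\alpha=pe^{b}$, differentiation to eliminate $e^{b}$, and a case analysis on the resulting small-function coefficients), which is precisely what lets the authors lower the threshold to $\min\{n,m\}\ge 2$. Your proposal instead follows the classical second-main-theorem strategy of the cited source, so the skeleton (apply the three-small-targets inequality to both $F=f^nL(z,g)$ and $G=g^nL(z,f)$, kill the $\alpha$-terms by hypothesis, and add the two inequalities to balance $T(r,f)$ against $T(r,g)$) is the right one for Theorem \ref{thD}.

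However, the quantitative bookkeeping you describe does not reach any of the stated thresholds, and the refinement you point to would not close the gap. In the entire case your bounds give $T(r,F)\ge nT(r,f)-T(r,g)+S$ and $\overline{N}(r,1/F)\le \overline{N}(r,1/f)+\overline{N}(r,1/L(z,g))\le T(r,f)+T(r,g)+S$, whence $(n-1)T(r,f)\le 2T(r,g)+S$ and, after symmetrizing, only $n\ge 4$ yields a contradiction. You attribute the loss to not exploiting the multiplicity $\ge n$ of the zeros of $f^n$ via the ramification term; but those zeros already enter with truncation $1$ in $\overline{N}(r,1/F)$, so the ramification term recovers nothing there. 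The refinement that actually works is a cancellation involving $L(z,g)$: from $f^n=F/L(z,g)$ one gets $T(r,F)\ge nT(r,f)-T(r,L(z,g))+N\bigl(r,1/L(z,g)\bigr)+S$, and the term $N\bigl(r,1/L(z,g)\bigr)$ cancels against $\overline{N}\bigl(r,1/L(z,g)\bigr)$ on the other side, leaving $(n-1)T(r,f)\le T(r,g)+S$ and hence a contradiction for $n\ge 3$. The meromorphic cases are worse for your accounting: estimating $\overline{N}(r,F)$, $\overline{N}(r,1/F)$ and the lower bound for $T(r,F)$ as you sketch (with $T(r,g^{(k)}(z+c))\le (k+1)T(r,g)+S$) lands at roughly $n\ge 2k+6$, the old threshold of Theorem \ref{thC}, not the claimed $n\ge k+4$; reaching $k+4$ requires the sharper zero/pole analysis of Laine--Latreuch. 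You should also justify that $\alpha$ is small with respect to $F$ (not merely with respect to $f$ and $g$) before invoking the three-small-functions theorem for $F$. As it stands the proposal is a correct strategy with a genuine gap in the estimates that carry the case distinctions.
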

\end{renewcommand}
Obviously, if $f\equiv g$, then Theorem \ref{thD} may reduce to Hayman's conjecture of different types.
Regarding Theorem D, Gao and Liu posed the following question for further investigation.
\begin{Que}\cite[Question 1]{Gao22}
	Can we reduce $n\ge3$ to $n\ge2$ in Theorem \ref{thD} for entire functions $f$ and $g$? And what is the sharp value $n$ for meromorphic
	functions $f,g$?
\end{Que}
By considering a broader class of a pair of delay-differential polynomials, we give a positive answer to the first part of Question 1.
\setcounter{thmx}{0}
\renewcommand{\thethmx}{\arabic{section}.\arabic{thmx}}
\begin{thmx}
	\label{th1.1}
	Let $f$ and $g$ be both transcendental entire functions, $\min \{n,m\}\ge 2$, $\alpha$ be a  non-zero small function with respect to $f$ and $g$. Then,  $f^nL(z,g)$ and $g^mL(z,f)$ cannot have a common non-zero generalized Picard exceptional small function $\alpha$, where $L(z,f)$ and  $L(z,g)$ are non-zero linear differential polynomial  with small entire functions of $f$ and $g$ as its coefficients.
\end{thmx}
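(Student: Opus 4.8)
I would argue by contradiction, so suppose that both $f^{n}L(z,g)-\alpha$ and $g^{m}L(z,f)-\alpha$ have only finitely many zeros. Write $F=f^{n}L(z,g)$ and $G=g^{m}L(z,f)$; both are entire, both are nonzero (since $f,g$ are transcendental and $L(z,f),L(z,g)\not\equiv0$), and both are transcendental (as $L(z,g)=F/f^{n}$ is entire, $F$ cannot be a polynomial). To keep the two error terms under control simultaneously I would set $T(r)=T(r,f)+T(r,g)$ and let $S(r)$ denote any quantity that is $o(T(r))$ outside a set of finite measure; since $\alpha$ and all coefficients of $L$ are small with respect to $f$ and $g$, their characteristics are $S(r)$. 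The first thing I would record is that $A_{1}:=(F-\alpha)'/(F-\alpha)$ and $A_{2}:=(G-\alpha)'/(G-\alpha)$ are small: the lemma on the logarithmic derivative gives $m(r,A_{1})=S(r)$, while the poles of $A_{1}$ lie only at the finitely many zeros of $F-\alpha$ and the few poles of $\alpha$, so $N(r,A_{1})=O(\log r)+S(r)=S(r)$; hence $T(r,A_{1})=S(r)$, and likewise $T(r,A_{2})=S(r)$. (Equivalently one may use the factorisation $F-\alpha=p_{1}e^{h_{1}}$ with $p_{1}$ a polynomial and $h_{1}$ entire, so that $A_{1}=p_{1}'/p_{1}+h_{1}'$.)

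The decisive step would exploit $n\ge 2$. At every zero $z_{0}$ of $f$ of multiplicity $\mu\ge 1$ the function $F=f^{n}L(z,g)$ vanishes to order at least $n\mu\ge 2$, so both $F(z_{0})=0$ and $F'(z_{0})=0$. Substituting this into $A_{1}=(F'-\alpha')/(F-\alpha)$ gives $A_{1}(z_{0})=\alpha'(z_{0})/\alpha(z_{0})$ at all such $z_{0}$ away from the few zeros of $\alpha$. Thus every zero of $f$, with finitely many exceptions, is a zero of the small function $A_{1}-\alpha'/\alpha$. If $A_{1}-\alpha'/\alpha\not\equiv0$, this forces $\overline N(r,1/f)\le T(r,A_{1}-\alpha'/\alpha)+S(r)=S(r)$; and if $A_{1}\equiv\alpha'/\alpha$, then $(F-\alpha)/\alpha$ is constant, so $F=c\alpha$ with $c\neq0$ (because $F\not\equiv0$), whence the zeros of $f$ lie among the zeros of $\alpha$ and again $\overline N(r,1/f)=S(r)$. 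In either case $\overline N(r,1/f)=S(r)$, and the symmetric argument applied to $G$ yields $\overline N(r,1/g)=S(r)$.

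From here the problem linearises. Since $f$ is entire, $T(r,f'/f)=m(r,f'/f)+N(r,f'/f)=S(r)+\overline N(r,1/f)=S(r)$, so $f'/f$ is a small function; a routine induction then shows $f^{(k)}/f$ is small for every $k$, and therefore $L(z,f)=\sum_i b_i f^{(\nu_i)}=f\,\Lambda_f$ with $\Lambda_f$ small and $\Lambda_f\not\equiv0$ (because $L(z,f)\not\equiv0$). The same holds for $g$, so $L(z,g)=g\,\Lambda_g$. Consequently $F=f^{n}g\,\Lambda_g$ satisfies $\overline N(r,1/F)\le\overline N(r,1/f)+\overline N(r,1/g)+\overline N(r,1/\Lambda_g)=S(r)$, while $\overline N(r,1/(F-\alpha))=O(\log r)=S(r)$ and $\overline N(r,F)=0$. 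Applying the second fundamental theorem for the three small targets $0,\alpha,\infty$ then gives $T(r,F)\le S(r)$, and symmetrically $T(r,G)\le S(r)$. Comparing with the lower bounds $T(r,F)\ge nT(r,f)-T(r,g)-S(r)$ and $T(r,G)\ge mT(r,g)-T(r,f)-S(r)$, which come from $f^{n}=F/L(z,g)$ and $g^{m}=G/L(z,f)$ together with $m(r,L(z,\cdot))\le T(r,\cdot)+S(r)$, and adding, one obtains, using $n,m\ge2$, $T(r)\le T(r,F)+T(r,G)+S(r)\le S(r)$, which is absurd.

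The main obstacle I expect is twofold. First is the multiplicity bookkeeping that converts the hypothesis $n\ge2$ into the vanishing of $F'$ at zeros of $f$, and hence into $\overline N(r,1/f)=S(r)$: this is precisely what defeats the borderline value $n=m=2$ that a crude counting argument (which only yields $nm\le4$) would leave undecided. Second is the need to manage $S(r,f)$ and $S(r,g)$ at once, which I would circumvent by working throughout with $T(r)=T(r,f)+T(r,g)$ so that the final contradiction reads $T(r)=o(T(r))$. The degenerate possibility $A_{1}\equiv\alpha'/\alpha$, i.e. $F$ a small multiple of $\alpha$, must be absorbed into the same estimate, as indicated above, and the analogous alternative for $g$ handled identically.
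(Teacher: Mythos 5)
Your argument has the same engine as the paper's, wearing different clothes. Where you observe that $n\ge 2$ forces $F=f^{n}L(z,g)$ and $F'$ to vanish simultaneously at every zero of $f$, so that the small function $A_{1}-\alpha'/\alpha$ (equivalently $\alpha F'-\alpha'F$, up to a unit) vanishes there, the paper differentiates the Weierstrass factorization $F-\alpha=pe^{b}$, eliminates $e^{b}$, and notes that every term of the resulting identity carries a positive power of $f$, so the zeros of $f$ fall among the zeros of the small right-hand side $p_{1}\alpha-p\alpha'$ --- the same quantity. Your packaging is in fact tidier: the paper's subcases ($p_{1}\equiv 0$, $p_{2}\equiv 0$, generic) collapse into the single dichotomy $A_{1}\equiv\alpha'/\alpha$ or not, and your multiplicity count even yields $N(r,1/f)=S(r)$ \emph{with} multiplicity (the order of vanishing of $\alpha F'-\alpha'F$ at a zero of $f$ of multiplicity $\mu$ is at least $n\mu-1\ge\mu$), which both proofs quietly need. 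The proofs then close differently: the paper reads $m(r,1/g^{m})\le m(r,f)+S(r)$ directly off the eliminated identity, while you invoke the second fundamental theorem for three small targets applied to $F$ and $G$.

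That last step is the one place needing repair. The three-small-functions theorem applied to $F$ requires $\alpha$ to be small \emph{with respect to $F$}, i.e.\ $T(r,\alpha)=o(T(r,F))$, and this does not follow from $T(r,\alpha)=o(T(r,f)+T(r,g))$: your own lower bound gives only $T(r,F)\ge nT(r,f)-T(r,g)-S(r)$, and $T(r,F)$ can be of strictly smaller order than $T(r)$ precisely when $nT(r,f)$ and $T(r,g)$ nearly cancel, which is the regime your final inequality must exclude. The fix stays inside your framework: from $F'-A_{1}F=\alpha'-A_{1}\alpha=:-\gamma$ with $\gamma\not\equiv 0$ small, one has $1/F=-(F'/F-A_{1})/\gamma$, hence $m(r,1/F)=S(r)$ (the logarithmic derivative estimate for $F$ needs only $T(r,F)=O(T(r))$), and therefore
\begin{align*}
nT(r,f)=m\Bigl(r,\tfrac{1}{f^{n}}\Bigr)+nN\Bigl(r,\tfrac{1}{f}\Bigr)+O(1)\le m\Bigl(r,\tfrac{1}{F}\Bigr)+m(r,g\Lambda_{g})+S(r)\le T(r,g)+S(r),
\end{align*}
which is exactly what you wanted from $T(r,F)=S(r)$, obtained without the small-function second main theorem; the mirror estimate for $G$ gives $mT(r,g)\le T(r,f)+S(r)$ and the contradiction follows as you wrote. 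With that substitution your proof is complete. (For what it is worth, your device $T(r)=T(r,f)+T(r,g)$ is cleaner bookkeeping than the paper's opening reduction to $T(r,g)\le O(T(r,f))$ via a limsup, which is itself not airtight.)
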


\begin{rem}
	If  $\rho_2(f)<1$ and $\rho_2(g)<1$, $L(z,f)$ and  $L(z,g)$ are non-zero linear delay-differential  polynomial  with small entire functions of $f$ and $g$ as their coefficients, $\alpha$ is a non-zero small function  with respect to $f$ and $g$, then $f^nL(z,g)$ and $g^nL(z,f)$ cannot have a common non-zero generalized Picard exceptional small function $\alpha$  by using the similar proof of Theorem \ref{th1.1}. Combining this result with Theorem \ref{th1.1}, we give a positive answer to  Question 1 for entire functions $f$ and $g$.
\end{rem}

\begin{rem}
	Obviously, if $f\equiv g$ is an entire function, Theorem \ref{th1.1} can be reduced to Theorems \ref{thA}, \ref{thB} and \cite[Corollary 1.7]{Liu24}, and improves Theorems \ref{thC}. What's more, our approaches differ significantly from the methods outlined in Theorems \ref{thA}, \ref{thB},  \ref{thC} and \ref{thD}.
\end{rem}

\begin{rem}
	The restriction that non-zero Picard exceptional small functions of $f$ and $g$ can not be removed. For example, let $f(z)=e^z$, $g(z)=e^{2z},$ $L(z,f)=f''+f'$, $L(z,g)=g''+g'$, then $f^3L(z,g)=6e^{5z}$, $g^4L(z,f)=2e^{9z}$ and $0$ is a Picard exceptional value of  $f^3L(z,g)$ and $g^5L(z,f)$.
\end{rem}

\begin{rem}
	\label{rm4}
	If $m=n=1$, then Theorem \ref{th1.1} does not hold. For example, let $f(z)=e^{-p(z)}$, $g(z)=e^{p(z)}$, where $p$ is a non-constant polynomial, then $fg-2=-1$ has no zeros.
\end{rem}

From Theorem D, it is evident that a small function  $\alpha$ respect to $f$ and $g$ can not serve as a common generalized Picard exceptional function for both $f^nL(,g)$ and $g^nL(z,f)$. Naturally, this prompts the question\cite[Question 1.1]{Liu25}: Given any two transcendental meromorphic functions $F(z)$ and $G(z)$, what can we obtain
for their common or different generalized Picard exceptional values or small functions of $F$ and $G$?  The direct consideration of  this question is rather difficult, thus Liu and Liu\cite{{Liu25}} raised the following  question.

\begin{Que}\cite[Question 2.1]{Liu25}
	Let $f$ and $g$ be two transcendental meromorphic functions and $m$, $n$ be positive integers. Do $f^ng$ and $g^mf$ have simultaneously the generalized  Picard exceptional values or small functions?
\end{Que}

Liu and Liu\cite{Liu25} considered Question 2 and obtained partial results.
\begin{renewcommand}{\thethm}{\Alph{thm} }
	\begin{thm}\cite[Theorem 2.1 and Theorem 3.1]{Liu25}
		\label{thE}
		Let f and g be transcendental entire functions.
		\begin{itemize}
			\item [(1)] If $\min\{m,n\}\ge2$, then $f^ng$ and $g^mf$ can not have simultaneously non-zero generalized
			Picard exceptional value.
			\item [(2)]If $n\ge2$  then $f^ng$ and $gf$ can not have simultaneously non-zero generalized
			Picard exceptional value except that $f(z) = se^T(z)$ and $g(z)=s^{-2}a_1e^{-2T(z)}+a_2s^{-1}e^{-T(z)}$, where $T(z)$ is an entire
			function and $s$ is a complex constant. In this case, $a_1$ is the Picard exceptional value of $f^2g$ and $a_2$ is the
			Picard exceptional value of $fg$.
			\item [(3)]  If $\min\{m,n\}\ge3$, $k$ is a positive integer, then $f^ng^{(k)}$ and $g^mf^{(k)}$ can not have simultaneously non-zero
			Picard generalized exceptional small functions.
		\end{itemize}
	\end{thm}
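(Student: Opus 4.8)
\textit{Proof proposal.} The plan is to argue by contradiction: assume that both $f^{n}L(z,g)-\alpha$ and $g^{m}L(z,f)-\alpha$ have only finitely many zeros and derive a contradiction. Write $F=f^{n}L(z,g)$ and $G=g^{m}L(z,f)$; since $f,g$ and all the coefficients of $L$ are entire, $F$ and $G$ are entire. The first move is to linearize the Picard-exceptional hypothesis. Putting $\beta_{1}=(F-\alpha)'/(F-\alpha)$, the lemma on the logarithmic derivative gives $m(r,\beta_{1})=S(r,F)$, while the poles of $\beta_{1}$ lie only over the finitely many zeros of $F-\alpha$ and over the poles of $\alpha$, whence $T(r,\beta_{1})=S(r,F)$. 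Rewriting $(F-\alpha)'=\beta_{1}(F-\alpha)$ yields
\[
F'=\beta_{1}F+R_{1},\qquad R_{1}:=\alpha'-\beta_{1}\alpha,
\]
with $T(r,R_{1})=S(r,F)$ and $R_{1}\not\equiv 0$ (otherwise $\alpha/(F-\alpha)$ would be constant, forcing $F$ to be a small function). The analogous relation $G'=\beta_{2}G+R_{2}$ holds with $T(r,R_{2})=S(r,G)$ and $R_{2}\not\equiv 0$; here the non-vanishing of $\alpha$ is exactly what is needed.

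The key step, and the place where the hypothesis $\min\{n,m\}\ge 2$ enters, is to show that $f$ and $g$ have very few zeros. Let $z_{0}$ be a zero of $f$ at which $\alpha$ is finite and non-zero; all but $S(r,F)$ many zeros of $f$ are of this type, since the remaining ones lie over zeros or poles of $\alpha$. At such a $z_{0}$ the function $\beta_{1}$ is holomorphic, and $z_{0}$ is a zero of $F=f^{n}L(z,g)$ of multiplicity at least $n\ge 2$; hence $F'$ vanishes there to order at least $n-1\ge 1$ and $\beta_{1}F$ to order at least $n$, so $R_{1}=F'-\beta_{1}F$ vanishes at $z_{0}$ to order at least $n-1\ge 1$. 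Therefore every such $z_{0}$ is a zero of $R_{1}$, and
\[
\overline{N}\Big(r,\tfrac{1}{f}\Big)\le N\Big(r,\tfrac{1}{R_{1}}\Big)+S(r,F)\le T(r,R_{1})+S(r,F)=S(r,F),
\]
and symmetrically $\overline{N}(r,1/g)=S(r,G)$. I expect this to be the main obstacle and the true heart of the argument: it is precisely the statement that naive value-counting cannot reach in the boundary case $n=m=2$, and the multiplicity-plus-logarithmic-derivative trick above is what makes the proof uniform in $n,m\ge 2$ (and, consistently with Remark \ref{rm4}, collapses when $n=m=1$).

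With $f$ and $g$ essentially zero-free I can control the zeros of the differential polynomials. Writing $L(z,g)=g\,\phi_{g}$ with $\phi_{g}=L(z,g)/g=\sum_{i}b_{i}\,g^{(\nu_{i})}/g$, the logarithmic derivative lemma gives $m(r,\phi_{g})=S(r,g)$, while each $g^{(\nu_{i})}/g$ has poles only over the zeros of $g$, of order at most $\nu_{i}$, so $N(r,\phi_{g})\le(\max_{i}\nu_{i})\,\overline{N}(r,1/g)+S(r,g)$. Consequently
\[
\overline{N}\Big(r,\tfrac{1}{L(z,g)}\Big)\le \overline{N}\Big(r,\tfrac{1}{g}\Big)+T(r,\phi_{g})=O\big(\overline{N}(r,1/g)\big)+S(r,g),
\]
and likewise for $L(z,f)$. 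Feeding in Step~2 gives $\overline{N}(r,1/L(z,g))=S$ and $\overline{N}(r,1/L(z,f))=S$.

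To close, I apply the second fundamental theorem for three small functions to $F$ with targets $0,\alpha,\infty$; since $F$ is entire and $\alpha$ is Picard-exceptional,
\[
T(r,F)\le \overline{N}\Big(r,\tfrac{1}{F}\Big)+S(r,F)\le \overline{N}\Big(r,\tfrac{1}{f}\Big)+\overline{N}\Big(r,\tfrac{1}{L(z,g)}\Big)+S(r,F),
\]
and similarly for $G$. Adding the two inequalities and inserting the bounds from Steps~2--3 bounds $T(r,F)+T(r,G)$ by a fixed linear combination of $\overline{N}(r,1/f),\overline{N}(r,1/g)$ and of the various $S(r,\cdot)$. The only remaining technical point is to check that each error term is $o\big(T(r,F)+T(r,G)\big)$: for this I first record the comparability $T(r,f),T(r,g)=O\big(T(r,F)+T(r,G)\big)$, which follows from $f^{n}=F/L(z,g)$, $g^{m}=G/L(z,f)$, the estimate $T(r,L(z,g))\le T(r,g)+S(r,g)$, and $\min\{n,m\}\ge 2$. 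This forces $T(r,F)+T(r,G)\le o\big(T(r,F)+T(r,G)\big)$, which is impossible because $F$ and $G$ are transcendental. The conceptual difficulty is entirely concentrated in Step~2; the bookkeeping of the error terms across $f,g,F,G$ in this last paragraph is the only other place requiring care, and it is routine once the comparability is recorded.
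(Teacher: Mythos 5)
Your argument, where it applies, is sound in outline and is essentially the same mechanism as the one this paper uses for Theorems \ref{th1.1} and \ref{th1.2}: your $\beta_1=(F-\alpha)'/(F-\alpha)$ and $R_1=F'-\beta_1F=\alpha'-\beta_1\alpha$ are exactly the quantities $p_1/p$ and $-p_2/p$ obtained in the paper by eliminating $e^{b}$ between the Weierstrass factorization and its derivative, and your Step 2 (a zero of $f$ is a zero of $F$ of order $\ge n\ge 2$, hence a zero of the small function $R_1$) is the same observation the paper uses to get $N(r,1/f)=S(r,f)$. The endgames differ --- you close with the second fundamental theorem for three small functions applied to $F$ and $G$, while the paper closes with proximity-function inequalities of the type $n\,m(r,f)\le m(r,g)+S$ and $m\,m(r,g)\le m(r,f)+S$ --- but both work. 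Two local repairs are still needed: (i) the statement asks about $F$ and $G$ having \emph{simultaneously} exceptional values, i.e.\ two possibly different targets $a_1,a_2$, not a common $\alpha$; this is cosmetic since your argument never uses commonality. (ii) Your dismissal of $R_1\equiv 0$ is too quick: $R_1\equiv 0$ gives $F\equiv(1+c)\alpha$, i.e.\ $f^{n}g$ equal to a nonzero constant, which is a genuine configuration ($f=e^{h}$, $g=ce^{-nh}$) and is only excluded by feeding it into the \emph{second} equation, as the paper does in its Case 1 / Subcase analysis; it is not an immediate contradiction.

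The genuine gap is that the statement you were asked to prove has three parts, and your proof-by-contradiction template covers only parts (1) and (3). Part (2) has $m=1$, and there the conclusion is not an impossibility but a classification: the pair $(f,g)$ with $f=se^{T}$, $g=s^{-2}a_1e^{-2T}+a_2s^{-1}e^{-T}$ genuinely realizes simultaneous exceptional values, so no contradiction can be derived. Concretely, your Step 2 breaks exactly there: a zero of $g$ produces a zero of $G=gf$ of multiplicity only $1$, so $R_2=G'-\beta_2G$ need not vanish at it and you cannot conclude $\overline{N}(r,1/g)=S(r,G)$. Handling $m=1$ requires a different argument that extracts the explicit forms of $f$ and $g$ (compare the proof of Theorem \ref{th1.2}-(2), which writes $f=Ae^{B}$, substitutes into both factorizations, and applies Borel-type/three-small-functions arguments to pin down $g=A_4e^{-nB}+A_3e^{-B}$); note also that this paper points out that the classification in Theorem \ref{thE}-(2) as stated is itself defective (the exponent $-2T$ should depend on $n$, and an extra growth hypothesis such as $T(r,g)\le O(T(r,f))$ is needed), so part (2) cannot be ``proved'' as literally stated in any case. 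As written, your proposal proves a strengthened form of parts (1) and (3) but leaves part (2) entirely unaddressed.
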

\end{renewcommand}

Liu and Liu asked the following question related Theorem \ref{thE}-$(3)$.
\begin{Que}\cite[Remark 3.2]{Liu25}
	Can we reduce $n\ge3$ to $n\ge2$ in Theorem \ref{thE}-$(3)$?
\end{Que}

 We consider Questions 2 and 3 and obtain the following results.
\begin{thmx}
	\label{th1.2}
	Let $f$ and $g$ be both transcendental entire functions.
	\begin{itemize}
		\item [(1)] If $\min\{m,n\}\ge2$, then $f^nL(z,g)$ and $g^m(z)L(z,f)$ cannot have simultaneously non-zero generalized Picard exceptional small functions, where $L(z,f)$ and  $L(z,g)$ are non-zero linear differential polynomial  with small entire functions of $f$ and $g$ as its coefficients.
		\item [(2)]If $n\ge2$ and  $T(r,g)\le O(T,f)$,  then  $f^ng$ and $gf$  can not have simultaneously non-zero generalized
		Picard exceptional value except that $f(z)=A(z)e^{B(z)}$, $g(z)=A_4(z)e^{-nB(z)}+A_3(z)e^{-B(z)}$, where $A,A_3,A_4$ are small entire function of $f$ and $g$, $B$ is an entire function.  What's more, $A^nA_4$ is the Picard exceptional small  function of $f^ng$, $AA_3$ is the Picard exceptional small function of $fg$.
	\end{itemize}
\end{thmx}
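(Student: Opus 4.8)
The plan is to argue by contradiction in both parts, the engine being a \emph{first-main-theorem versus second-main-theorem squeeze} that forces the base functions to be almost zero-free. Assume $a_1,a_2$ are non-zero small functions (relative to $F:=f^{n}L(z,g)$ and $G:=g^{m}L(z,f)$) such that $F-a_1$ and $G-a_2$ have only finitely many zeros. Since $f,g$ are entire, so are $F,G$, and applying the second main theorem to $F$ with the three small targets $0,a_1,\infty$ gives $T(r,F)\le\overline{N}(r,1/F)+S(r,F)$, because $\overline{N}(r,F)=0$ and $\overline{N}(r,1/(F-a_1))=O(\log r)=S(r,F)$. Comparing with the first main theorem $N(r,1/F)\le T(r,F)+O(1)$ yields $N(r,1/F)-\overline{N}(r,1/F)\le S(r,F)$. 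Now the factor $f^{n}$ forces every zero of $f$ of multiplicity $p$ to be a zero of $F$ of multiplicity at least $np$, contributing $np-1\ge(n-1)p$ to $N-\overline{N}$, so $(n-1)N(r,1/f)\le N(r,1/F)-\overline{N}(r,1/F)\le S(r,F)$; as $n\ge2$ this gives $N(r,1/f)=S(r,F)$, and symmetrically $N(r,1/g)=S(r,G)$. I regard this multiplicity squeeze as the decisive new idea, and it is exactly what allows $\min\{n,m\}\ge2$ (rather than $\ge3$) to suffice.

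To close Part~(1), a crude bound $\overline{N}(r,1/L(z,g))\le T(r,L(z,g))\le T(r,g)+S(r,g)$ combined with the lower bound $T(r,F)\ge nT(r,f)-T(r,g)-S(r,g)$ (from $f^{n}=F/L(z,g)$) and $\overline{N}(r,1/f)=S$ gives $nT(r,f)\le2T(r,g)+S$, and symmetrically $mT(r,g)\le2T(r,f)+S$; since $n,m\ge2$ these force $T(r,f)\sim T(r,g)$, so the four error classes $S(r,f),S(r,g),S(r,F),S(r,G)$ coincide and $T(r,F)\asymp T(r,f)\to\infty$. The final ingredient uses the linearity of $L$: writing $\Psi:=L(z,g)/g=\sum_{i}b_i\,g^{(\nu_i)}/g$, the logarithmic derivative lemma gives $m(r,\Psi)=S(r,g)$, while the poles of $\Psi$ lie only over the zeros of $g$, so $N(r,\Psi)\le(\max_i\nu_i)\,\overline{N}(r,1/g)=S(r,g)$; hence $\Psi$ is a small function and, since $L(z,g)=g\Psi$, we get $\overline{N}(r,1/L(z,g))\le\overline{N}(r,1/g)+\overline{N}(r,1/\Psi)=S(r,g)$. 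Feeding this back, $T(r,F)\le\overline{N}(r,1/f)+\overline{N}(r,1/L(z,g))+S(r,F)=S(r,f)$, contradicting $T(r,F)\asymp T(r,f)\to\infty$. The delay-differential version of the Remark runs identically once $\rho_2(f),\rho_2(g)<1$ is invoked to license the difference analogue of the logarithmic derivative lemma.

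For Part~(2), with $F=f^{n}g$, $G=fg$ and non-zero constant exceptional values $a_1,a_2$, the same squeeze applied to $F$ (here $n\ge2$) gives $N(r,1/f)=S(r,f)$, the hypothesis $T(r,g)\le O(T(r,f))$ being exactly what guarantees $T(r,F)=O(T(r,f))$ and hence $S(r,F)=S(r,f)$. I would then upgrade this to a factorisation $f=Ae^{B}$, with $A$ a small entire function (the Weierstrass product over the sparse zeros of $f$) and $B$ entire. Substituting into $fg-a_2=q_2$ and $f^{n}g-a_1=q_1$, where $q_1,q_2$ are entire with finitely many zeros, and eliminating $g$ produces the single functional equation $A^{n-1}(a_2+q_2)e^{(n-1)B}=a_1+q_1$. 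Writing $q_i=R_ie^{h_i}$ with $R_i$ rational and $h_i$ entire turns this into a four-term exponential identity, and the plan is to apply a Borel/Steinmetz-type uniqueness theorem for exponential sums to force the exponents to pair off; the surviving relations collapse $g=(a_2+q_2)A^{-1}e^{-B}$ to the two-mode form $g=A_4e^{-nB}+A_3e^{-B}$, with $AA_3\equiv a_2$ and $A^{n}A_4\equiv a_1$ identifying the two exceptional small functions. I expect this Borel step to be the main obstacle: one must keep all coefficients inside the small class and exclude spurious exponent coincidences, and it is precisely here that $T(r,g)\le O(T(r,f))$ is indispensable, preventing $g$ from dominating $f$.
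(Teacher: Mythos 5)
Your argument is essentially correct, and for part (1) it takes a genuinely different route from the paper. The paper proves (1) by the same machinery as Theorem 1.1: Weierstrass factorization $f^nL(z,g)-a_1=p_1e^{b_1}$, differentiation, elimination of $e^{b_1}$, and a case analysis on whether the auxiliary coefficients $p_1'+b_1'p_1$ and $h_1a_1-p_1a_1'$ vanish, eventually reaching incompatible inequalities between $m(r,f)$ and $m(r,g)$. You instead apply the truncated second main theorem for the three small targets $0,a_1,\infty$ directly to $F=f^nL(z,g)$ and squeeze against the first main theorem to get $N(r,1/F)-\overline{N}(r,1/F)\le S(r,F)$, whence $(n-1)N(r,1/f)\le S(r,F)$; this multiplicity count is exactly where $n\ge 2$ enters, and the observation that $\Psi=L(z,g)/g$ is then a small function (poles only over the now-sparse zeros of $g$) closes the loop with $T(r,F)\le \overline{N}(r,1/f)+\overline{N}(r,1/L(z,g))+S=S(r,f)$. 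This buys a shorter, case-free argument that never factors anything; the paper's elimination method buys explicit control of the auxiliary functions, which it then reuses verbatim for the structural classification in part (2). One point to make explicit in your version: the comparability $T(r,f)\asymp T(r,g)$ must be established (as you do, via the crude bounds $nT(r,f)\le 2T(r,g)+S$ and $mT(r,g)\le 2T(r,f)+S$) \emph{before} the error classes $S(r,f),S(r,g),S(r,F),S(r,G)$ may be identified, since a priori $S(r,F)$ is only $o(nT(r,f)+T(r,g))$.

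For part (2) your route to $N(r,1/f)=S(r,f)$ again differs, but from the factorization $f=Ae^{B}$ onward you converge to the paper's endgame: eliminating $g$ gives the same three-term exponential identity $a_3e^{(n-1)B+b_2}+a_4e^{(n-1)B}-p_1e^{b_1}=a_1$ that the paper reaches by substitution, and the Borel step you flag as the main obstacle is exactly what the paper dispatches by citing \cite[Theorem 1.56]{ccy2003}. The "spurious exponent coincidence" you worry about (the small right-hand side $a_1$ being matched by the term $a_4e^{(n-1)B}$ rather than $a_3e^{(n-1)B+b_2}$) is excluded because $T(r,e^{(n-1)B})=(n-1)T(r,e^{B})+O(1)$ cannot be $S(r,f)$ when $n\ge 2$, so only the term carrying $b_2$ can collapse onto $a_1$; this forces $e^{b_2}$ to be a small multiple of $e^{-(n-1)B}$ and yields $g=A_4e^{-nB}+A_3e^{-B}$ with $A^nA_4=a_1$ and $AA_3=a_2$. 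Finally, note that both you and the paper assert without proof that the Weierstrass factor $A$ built over the zeros of $f$ is a small function of $f$ once $N(r,1/f)=S(r,f)$; this is standard for finite order but deserves a word of justification in general.
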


\begin{rem}
	If  $\rho_2(f)<1$ and $\rho_2(g)<1$, $L(z,f)$ and  $L(z,g)$ are linear delay-differential  polynomial  with small entire functions of $f$ and $g$ as its coefficients,  then Theorem \ref{th1.2}-$(1)$ still holds by using the similar proof of Theorem \ref{th1.2}.
\end{rem}
\begin{rem}
	Let $L(z,f)=f^{(k)}$ and $L(z,g)=g^{(k)}$, then Theorem \ref{th1.2}-(1)  gives a  positive answer to Question 3. If $k$=0, Theorem \ref{th1.2}-$(1)$ improves Theorem \ref{thE}-$(1)$ to the case of Picard exceptional small functions.
\end{rem}

\begin{rem}
	Note that $f(z)=e^{z}$ and $g(z)=2e^{-3z}+e^{-z}$  satisfy the relationships $f^3(z)g(z)=2+e^{2z}$ and $g(z)f(z)=2e^{-2z}+1$.  Consequently, $2$ is a Picard exceptional value of $f^3g$, $1$  is a Picard exceptional value of $gf$. However, $g(z)=2e^{-3z}+e^{-z}$  does not conform to the form $g(z)=s^{-2}a_1e^{-2T(z)}+a_2s^{-1}e^{-T(z)}$ specified in Theorem \ref{thE}-$(2)$. This suggests that there may be a  gap in the proof of Theorem  \ref{thE}-$(2)$.
	Our  Theorem \ref{th1.2}-$(2)$ rectify the gap using different methods from those in Theorem  \ref{thE} under the condition $T(r,g)\le O(T(r,f))$.  
	What's more, the condition $T(r,g)\le O(T(r,f))$ is  necessary. For example, let  $g(z)=e^{z^3}+e^{-3z}$ and $f(z)=e^{z}$,  which satisfies $f^2g=e^{z^3+2z}+e^{-z}$, $gf=e^{z^3+z}+e^{-2z}$ and $T(r,g)>O(T(r,f))$. Here $e^{-z}$  is a Picard exceptional small function of $f^2g$, $e^{-2z}$ is a Picard exceptional small function of $gf$. But $g(z)$  does not conform to the form specified in Theorem \ref{th1.2}-$(2)$. 

\end{rem}
\begin{rem}
	\label{rm7}
	
	An example of Theorem \ref{th1.2}-$(2)$ in the context of small functions is given as follows: $f(z)=e^{z^2+z}e^{z^5}$, $g(z)=e^{-5z^5}+e^{z^3}e^{-z^5}$,  which satisfies
	$f^5g=e^{5z^2+5z}+e^{4
		z^5}e^{5z^2+5z+z^3}$, $fg=e^{z^2+z}e^{-4z^5}+e^{z^2+z+z^3}$. Here, $e^{5z^2+5z}$ is a Picard exceptional small function of $f^5g$, $e^{z^2+z+z^3}$ is a Picard exceptional small function of $fg$.
\end{rem}
Remark \ref{rm4}   can also be used to demonstrate that Theorem \ref{th1.2} does not hold when $n=m=1$.  Combining this with Theorem \ref{th1.2} and  Remark \ref{rm7}, we obtain the following corollary, which completely solves Question 2 for the case where $f$ and $g$ are entire functions.
\begin{cor}
	Let $f$ and $g$ be two transcendental entire functions and $m$, $n$ be positive integers, then $f^ng$ and $g^mf$ can not have simultaneously the generalized  Picard exceptional small functions except $\min\{m,n\}=1$.
\end{cor}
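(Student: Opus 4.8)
The plan is to establish the stated dichotomy by separating the two ranges $\min\{m,n\}\ge 2$ and $\min\{m,n\}=1$: the former I would reduce directly to Theorem~\ref{th1.2}-$(1)$, while the latter I would settle by producing explicit pairs $(f,g)$ that do admit simultaneous exceptional small functions, so that the restriction $\min\{m,n\}\ge 2$ is seen to be sharp.

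For the non-existence half, suppose $\min\{m,n\}\ge 2$. The key observation is that the identity operator $L(z,h)=h$ is itself a non-zero linear differential polynomial whose sole coefficient is the constant $1$, a small entire function of $f$ and $g$. Taking $L(z,g)=g$ and $L(z,f)=f$ converts $f^nL(z,g)$ and $g^mL(z,f)$ into $f^ng$ and $g^mf$, so Theorem~\ref{th1.2}-$(1)$ applies verbatim and shows that $f^ng$ and $g^mf$ cannot carry simultaneous non-zero generalized Picard exceptional small functions. The only point to check here is that this specialization meets the hypotheses of Theorem~\ref{th1.2}-$(1)$, which it plainly does.

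For the sharpness half, I would exhibit counterexamples covering every case with $\min\{m,n\}=1$. When $m=n=1$, the pair $f=e^{-p}$, $g=e^{p}$ of Remark~\ref{rm4} satisfies $fg\equiv 1$, so the constant $2$ is a common Picard exceptional value of $fg$ and $gf$. When $m=1$ and $n\ge 2$, the construction of Remark~\ref{rm7} furnishes $f$ and a two-term exponential sum $g$ for which $f^ng$ and $fg$ each differ from an explicit entire small function by a zero-free entire function, so both products acquire non-zero exceptional small functions simultaneously; the case $n=1$, $m\ge 2$ then follows by interchanging the roles of $f$ and $g$. The step I would treat most carefully is exactly this sharpness half: one must confirm that the candidate functions (for instance $e^{5z^2+5z}$ for $f^{5}g$ and $e^{z^2+z+z^3}$ for $fg$ in Remark~\ref{rm7}) are genuinely \emph{small} relative to $f$ and $g$ and that the resulting differences are genuinely zero-free, so that they are bona fide generalized Picard exceptional small functions rather than mere formal cancellations. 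Since the non-existence half is an immediate specialization of Theorem~\ref{th1.2}-$(1)$, no serious obstacle arises there; combining the two halves gives precisely the asserted statement.
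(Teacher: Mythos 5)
Your proposal is correct and follows essentially the same route as the paper, which obtains the corollary by specializing Theorem~\ref{th1.2}-$(1)$ to $L(z,h)=h$ for the range $\min\{m,n\}\ge 2$ and by citing the examples of Remark~\ref{rm4} and Remark~\ref{rm7} (together with symmetry in $f$ and $g$) to show the exceptional case $\min\{m,n\}=1$ genuinely occurs. The only minor point is that Remark~\ref{rm7} is a single instance with $n=5$, $m=1$, so for arbitrary $n\ge 2$, $m=1$ one should note the obvious general family such as $f=e^{z}$, $g=ae^{-nz}+be^{-z}$, as in the paper's remark following Theorem~\ref{th1.2}.
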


\section{Proof  of Theorems  \ref{th1.1} and \ref{th1.2}}
\begin{proof}[Proof of Theorem \ref{th1.1}]Let 
	$\underset{r\rightarrow \infty}{\lim\sup}\frac{T(r,f)}{T(r,g)}=k$. If $k\not=\infty$, then $T(r,f)\le O(T(r,g))$, if $k=\infty$, then $T(r,g)\le O(T(r,f))$.
	Therefor, without loss of generality, we will prove Theorem \ref{th1.1} for the case $T(r,g)\le O(T(r,f))$.
	
			 We claim $f^nL(z,g)-\alpha$ or $g^nL(z,f)-\alpha$ has infinitely many zeros, where $\alpha$ is a small function of $f$ and $g$. Otherwise,  $f^nL(z,g)-\alpha$  and $g^mL(z,f)-\alpha$ has finitely many zeros, by the  Weierstrass's factorization theorem\cite[pp. 145]{stein}, we have
			\begin{align}
				\label{0.1}
				f^n(z)L(z,g)-\alpha(z)=p(z)e^{b(z)},
			\end{align}
			where $p$  is  a  small meromorphic function of $f$ and $g$, $b$ is an entire function, and
			\begin{align}
				\label{001}
				g^m(z)L(z,f)-\alpha(z)=h(z)e^{d(z)},
			\end{align}
			where $h$  is  a  small meromorphic function of $f$ and $g$, $d$ is an entire function.

			Since $T(r,g)\le O(T(r,f))$, then from \eqref{0.1}, we get $T(r,e^b)\le O(T(r,f))$. Which means $b$ is a small function of $f$. 
			Differentiating the equation \eqref{0.1}, we get
			\begin{align}
				\label{0.2}
				nf^{n-1}f'L(z,g)+f^nL'(z,g)-\alpha'=p_1e^b,
			\end{align}
			where $p_1=p'+b'p$ and $p_1$ is a small function of $f$.  

			\setcounter{case}{0}
	\begin{case}
		\rm{$p_1=p'+b'p\equiv0$.
		Since  $p_1=p'+b'p\equiv0$, by  integrating, we can obtain $p(z)=c_1e^{-b(z)}$, where $c_1$ is a non-zero constant. 
			Substituting $p$ into \eqref{0.1}, we get $f^n(z)L(z,g)=\alpha+c_1$.  
			Since $\alpha+c_1$ is a small function of $f$ and $L(z,g)$ is an entire function, then we can see $N(r,\frac{1}{f})=S(r,f)$.
 By  $f^nL(z,g)=\alpha+c_1$
and  the logarithmic derivative lemma \cite[Theorem 2.2]{hayman}, then 
\begin{align}
	\label{003}
	nm(r,f)=m(r,\frac{\alpha+c_1}{L(z,g)})
	&\le T(r,L(z,g))+S(r,f) \\ \nonumber
	&\le m(r,g)+m(r,\frac{L(z,g)}{g})+S(r,f) \\ \nonumber
	&\le m(r,g)+S(r,f).
\end{align}
Thus, $T(r,f)=O(T(r,g))$. Therefor $S(r,f)=S(r,g)$.
Then from \eqref{001}, we get $T(r,e^d)\le O(T(r,g))$. Which means $d$ is a small function of $f$ and $g$. 

Differentiating the equation \eqref{001}, we get
			\begin{align}
				\label{002}
				ng^{m-1}g'L(z,f)+g^mL'(z,f)-\alpha'=h_1e^{d},
			\end{align}
			where $h_1=h'+d'h$ and $h_1$ is a small function of $g$.  

\setcounter{subsection}{1}
	\setcounter{subcase}{0}
	\renewcommand{\thesubcase}{\arabic{subsection}.\arabic{subcase}}

	\begin{subcase}$h_1\equiv0$. 
		\rm{
			Since $h_1\equiv0$, then as  the same as $p_1\equiv0$, we get $g^mL(z,f)=\alpha+c_2$, where $c_2$ is a non-zero constant. Therefor $N(r,\frac{1}{g})=S(r,g)$. 
Since  $g^mL(z,f)=\alpha+c_2$,
by the logarithmic derivative lemma \cite[Theorem 2.2]{hayman}, then 

\begin{align}
	\label{004}
	mm(r,g)\le m(r,f)+S(r,f)
\end{align}
From \eqref{003} and \eqref{004}, we get 
$nm(r,f)\le m(r,g)+S(r,f)\le \frac{1}{m}m(r,f)+S(r,g)$, which means $n<1$, that is impossible.

		}
	\end{subcase}

\begin{subcase}
	\rm{
	 $h_1\not\equiv0$. By eliminating $e^d$ from equations \eqref{001} and \eqref{002}, we can obtain
			\begin{align}
				\label{005}
				h_1g^mL(z,f)-hng^{m-1}g'L(z,f)-hg^mL'(z,f)=h_1\alpha-h\alpha'=h_2.
			\end{align}

If $h_2\equiv0$,  by integrating, then $h=\alpha c_3e^{-d}$, where $c_3$ is a non-zero constant. Substituting $h$ into \eqref{001}, we get $g^mL(z,f)=(c_3+1)\alpha$. Then  using the same methods as Subcase 1.1, we can get a contradiction. 

If $h_2\not\equiv0$, 
suppose $N(r,\frac{1}{g})\not=S(r,g)$, since coefficients of the equation \eqref{005} are small functions of $g$,   then there exists a zero $z_0$ of $g$ such that the coefficients of the equation \eqref{005} are neither zero nor infinite at that point.  Substituting $z_0$ into the equation \eqref{005}, we easily obtain a contradiction.Therefore, $N(r,\frac{1}{g})=S(r,f)$. By \eqref{005},then 
\begin{align}
	\label{006}
	m(r,\frac{1}{g^m})&\le m(r,\frac{1}{h_2})+m(r,h_1L(z,f)-hm\frac{g'}{g}L(z,f)-hL'(z,f)) \\ \nonumber
	&\le m(r,f)+S(r,f). 
\end{align}
By $N(r,\frac{1}{g})=S(r,f)$ and \eqref{006}, then 
$mT(r,g)=mm(r,\frac{1}{g})+mN(r,\frac{1}{g})+O(1)\le T(r,f)+S(r,f)$. Combining this and $\eqref{003}$, we get $nm(r,f)\le \frac{1}{m}m(r,f)+S(r,f)$, which is impossible, since $n\ge2$. 
	}
\end{subcase}

\begin{case}
	\rm{
	$p_1=p'+b'p\not\equiv0$.  By eliminating $e^b$ from equations \eqref{0.1} and \eqref{0.2}, we can obtain
			\begin{align}
				\label{1.3}
				p_1f^nL(z,g)-pnf^{n-1}f'L(z,g)-pf^nL'(z,g)=p_1\alpha-p\alpha'=p_2.
			\end{align}

			 If $p_2=p_1\alpha-p\alpha'\equiv0$, by integrating,   we can obtain $\frac{\alpha}{p}=c_2e^b$. Substituting $p$ into \eqref{0.1}, we get $f^n(z)L(z,g)=(1+c_2)\alpha$.
			 This situation is the same as Case 1, we omit the proof here. 

			 If $p_2\not\equiv0$, by  using the  same method as in case $h_2\not\equiv0$, then   $N(r,\frac{1}{f})=S(r,f)$, $m(r,\frac{1}{f})=T(r,f)+S(r,f)$. 
			 From \eqref{1.3},  as the same as \eqref{006} we can get 
			 \begin{align}
				\label{008}
				nT(r,f)+S(r,f)=m(r,\frac{1}{f^n})\le T(r,g) +S(r,f),
			 \end{align}
 which means $T(r,f)=O(T(r,g))$, $S(r,f)=S(r,g)$. Then from \eqref{001}, we get $T(r,e^d)\le O(T(r,g))$. Which means $d$ is a small function of $f$ and $g$.  
 Differentiating the equation \eqref{001}, we also get \eqref{002}. 
\setcounter{subsection}{2}
	\setcounter{subcase}{0}
	\renewcommand{\thesubcase}{\arabic{subsection}.\arabic{subcase}}

	\begin{subcase}
		$h_1\equiv0$. 
			Since $h_1\equiv0$, then as  the same as Subcase 1.1, we get $g^mL(z,f)=\alpha+c_2$, where $c_2$ is a non-zero constant. Therefor $N(r,\frac{1}{g})=S(r,g)$. 
Since  $g^mL(z,f)=\alpha+c_2$, then $m(r,g)\le \frac{1}{m}m(r,f)$. By this and \eqref{008}, we get  $nT(r,f)\le \frac{1}{m}m(r,f)$. Which is impossible, since $n\ge2$. 
	\end{subcase}

	\begin{subcase}
		$h_1\not\equiv0$. By eliminating $e^d$ from equations \eqref{001} and \eqref{002}, we can obtain \eqref{005}. 
	\end{subcase}
If $h_2\equiv0$,  Then  using the same methods as Subcase 1.1, we can get a contradiction. 

If $h_2\not\equiv0$, from \eqref{005}, we get the zero of $g$ must be the zero of $h_2$. Since $h_2$ is a small function of $g$, then $N(r,\frac{1}{g})=S(r,f)$. By \eqref{005},then we can also get \eqref{006}. Combining \eqref{006} and \eqref{008}, we get $n<1$, which is impossible. 
	}
\end{case}

		}

	\end{case}

\end{proof}

\begin{proof}[Proof of Theorem \ref{th1.2}]  
Theorem \ref{th1.2} will be proved in two cases below.

	\setcounter{case}{0}
\begin{case}\rm{
	 $\min\{m,n\}\ge2$. 
Let 
	$\underset{r\rightarrow \infty}{\lim\sup}\frac{T(r,f)}{T(r,g)}=k$. If $k\not=\infty$, then $T(r,f)\le O(T(r,g))$, if $k=\infty$, then $T(r,g)\le O(T(r,f))$.
	Therefor, without loss of generality, we will prove Theorem \ref{th1.2}-(1) for the case $T(r,g)\le O(T(r,f))$.	
	Suppose
	 $f^nL(z,g)$ and $g^mL(z,f)$ have simultaneously non-zero generalized  Picard exceptional small functions. By Weierstrass's factorization theorem, we assume that
			\begin{equation}
				\label{0.8}
				\begin{cases}
					f^nL(z,g)-a_1(z)=p_1(z)e^{b_1(z)},\\
					g^mL(z,f)-a_2(z)=p_2(z)e^{b_2(z)},
				\end{cases}
			\end{equation}
			where $a_1$ and $b_1$ are non-zero small functions of $f^nL(z,g)$, $a_2$ and $b_2$ are non-zero small functions of $g^mL(z,f)$.	
			Since $a_1$ is a small function of $f^nL(z,g)$, then $a_1$ is a small function of $f$. Consequently, the subsequent proof follows a similar approach to that of Theorem  \ref{th1.1} and  we can get a contradiction, we omit the proof here. Now  the proof of Theorem \ref{th1.2}-(1) is completed.
}
\end{case}

\begin{case}
	\rm{  $n\ge2$ and $T(r,g)\le O(T(r,f))$.
Suppose $f^ng$ and $gf$ have simultaneously non-zero  generalized Picard exceptional small functions. By Weierstrass's factorization theorem, we assume that
					\begin{equation}
						\label{0.9}
						\begin{cases}
							f^ng-a_1(z)=p_1(z)e^{b_1(z)},\\
							gf-a_2(z)=p_2(z)e^{b_2(z)},
						\end{cases}
					\end{equation}
					where $a_1$ and $b_1$ are non-zero small functions of $f^ng$, $a_2$ and $b_2$ are non-zero small functions of $gf$.
Since $T(r,g)\le O(T(r,f))$, then $a_i$ and $b_i$ ($i=1,2$)  are small function of $f$. 

Differentiating the first equation  of \eqref{0.9}, we get
			\begin{align}
				\label{0.10}
				nf^{n-1}f'g+f^ng'-a_1'=h_1e^{b_1},
			\end{align}
			where $h_1=p_1'+b_1'p_1$ and $h_1$ is a small function of $f$.  
\setcounter{subsection}{2}
	\setcounter{subcase}{0}
	\renewcommand{\thesubcase}{\arabic{subsection}.\arabic{subcase}}

	\begin{subcase}
\rm{
		$h_1=p_1'+b_1'p_1\equiv0$. Since  $h_1\equiv0$, by  integrating, we can obtain $p_1(z)=c_1e^{-b_1(z)}$, where $c_1$ is a non-zero constant. 
			Substituting $p_1$ into \eqref{0.1}, we get $f^n(z)g=a_1+c_1$.  Since $a_1+c_1$ is a small function of $f$ and $g$ is an entire function, then we can see $N(r,\frac{1}{f})=S(r,f)$ and $nT(r,f)\le m(r,g)+S(r,f)$. Therefor $S(r,f)=S(r,g)$, $a_i$ and $b_i$ ($i=1,2$)  are small function of $f$ and $g$.

By  the  Weierstrass's factorization theorem, we can get $f(z)=A(z)e^{B(z)}$, where $A$ is a small function of $f$, $B$ is an entire function.
					Substituting the expression for $f$ into 
					the second equation of \eqref{0.9}, we get
					\begin{align}
						\label{1.0}
						g(z)=A_2(z)e^{b_2(z)-B(z)}+A_3(z)e^{-B(z)},
					\end{align}
	where $A_2$, $A_3$ are small functions of $f$ and $g$. Substituting the expressions for $f$ and $g$ into $f^n(z)g=a_1+c_1$, then 

	\begin{align}
	\label{3.1}
	\alpha_1e^{(n-1)B+b_2}+\alpha_2e^{(n-1)B}=a_1+c_1,
	\end{align}
	where $\alpha_i,i=1,2$ are non-zero small functions of $f$ and $g$. 
	Since $\alpha_i (i=1,2), a_1+c_1$  are small functions of $f=Ae^B$ and $n>1$, then they are also the small function of $e^{(n-1)B}$. By \eqref{3.1} and using the second fundamental theorem concerning three small functions to $e^{(n-1)B}$, we easily can get a contradiction. 

}
	\end{subcase}

\begin{subcase}
$h_1=p_1'+b_1'p_1\not\equiv0$.  By eliminating $e^{b_1}$ from equations \eqref{0.9} and \eqref{0.10}, we can obtain
			\begin{align}
				\label{01.3}
				h_1f^ng-p_1nf^{n-1}f'g-p_1f^ng'=h_1a_1-p_1a_1'=h_2.
			\end{align}

			 If $h_2=h_1a_1-p_1a_1'\equiv0$, by integrating,   we can obtain $\frac{a_1}{p_1}=c_2e^{b_1}$, where $c_2$ is a non-zero constant. Substituting $p_1$ into \eqref{0.9}, we get $f^ng=(1+c_2)a_1$.
			 This situation is the same as Subcase 2.1, we omit the proof here. 

			 If $h_2\not\equiv0$, suppose $N(r,\frac{1}{f})\not=S(r,f)$, since coefficients of the equation \eqref{01.3} are small functions of $f$,   then there exists a zero $z_0$ of $f$ such that the coefficients of the equation \eqref{01.3} are neither zero nor infinite at that point.  Substituting $z_0$ into the equation \eqref{01.3}, we easily obtain a contradiction.
			Therefore, $N(r,\frac{1}{f})=S(r,f)$,  then  $m(r,\frac{1}{f})=T(r,f)+S(r,f)$. 
From \eqref{01.3},  we can get 
			 \begin{align*}
				nT(r,f)+S(r,f)=m(r,\frac{1}{f^n})\le T(r,g) +S(r,f),
			 \end{align*}
			 Therefor $S(r,f)=S(r,g)$, $a_i$ and $b_i$ ($i=1,2$)  are small function of $f$ and $g$.  By  the  Weierstrass's factorization theorem, we can get $f(z)=A(z)e^{B(z)}$, where $A$ is a small function of $f$, $B$ is an entire function. 
			 

			 Substituting the expression for $f$ into 
					the second equation of \eqref{0.9},  we also get \eqref{1.0}. Substituting $f$ and \eqref{1.0} into the first equation of \eqref{0.9}, we get
					\begin{align}
						\label{1.1}
						a_3e^{(n-1)B+b_2}+a_4e^{(n-1)B}-a_1=p_1e^{b_1},
					\end{align}
				where $a_3,a_4$ are non-zero small functions of $f$ and $g$.
				Since $a_1,a_3,a_4, p_1$ are small functions of $f$ and $a_1$ is a small function of $e^{b_1}$, by \cite[Theorem 1.56]{ccy2003} and \eqref{1.1}, 
				we  get  $a_3e^{(n-1)B+b_2}=a_1$. Substituting the
			expressions for $e^{b_2}$ into \eqref{1.0}, we get $g(z)=A_4(z)e^{-nB(z)}+A_3(z)e^{-B(z)}$, where $A_4 $ is a non-zero small function of $g$. Substituting $f$ and $g$ in \eqref{0.9} and using the second
fundamental theorem concerning three small functions, we get $A^nA_4=a_1$, $AA_3=a_2$.  Now we get the Theorem \ref{th1.2}-(2). 		 
			  
\end{subcase}

	}
\end{case}

\end{proof}


\end{document}